\newtheorem{theorem}{Theorem}[section]
\newtheorem{corollary}{Corollary}[section]
\newtheorem{definition}{Definition}[section]
\numberwithin{equation}{section} \textwidth12.5cm
\begin{document}
\title[]{On a new Type Mannheim Curve}
\author[]{\c{C}ET\.{I}N CAMCI}
\address{Department of Mathematics \\
Onsekiz Mart University \\
17020 \c{C}anakkale, Turkey}
\email{ccamci@comu.edu.tr}
\date{}
\subjclass[2000]{Primary 53C15; Secondary 53C25}
\keywords{Sasakian Space,\thinspace\ curve }

\begin{abstract}
In this paper, we define a new type curve as $V-$Mannheim curve, $V-$%
Mannheim partner curve and generating curve of Mannheim curve. We give
characterization of \ these curve. In addition, \ we study a relation
between Mannheim curve and spherical curve. Eventually, with Salkowski
method, we give an example of the Mannheim curve.
\end{abstract}

\maketitle


\section{\textbf{Introduction}}

Regle surface plays an important role in the applied science. The condition
that the principal normal of the based curve of a one regle surface may be
the principal normal of the based curve of a second regle surface. This
problem proposed by Saint-Venant and solved by Bertrant. The generalized of
the Saint-Venant and Bertrant problem have been studied by many geometers.
In this space, we can study six cases to be considered(\cite{JM}). The other
important condition that the principal normal of $\left( \alpha \right) $
curve may be the binormal of $\left( \beta \right) $ curve. If the condition
satify between corresponded point, then it is said that $\left( \alpha
\right) $ is Mannheim curve and $\left( \beta \right) $ is Mannheim partner
curve (\cite{Liu}). The curve $\left( \alpha \right) $ is Mannheim curve if
and only if $\kappa =\lambda \left( \kappa ^{2}+\tau ^{2}\right) $where $%
\kappa $, $\tau $ \ are curveture of the curve $\left( \alpha \right) $ and $%
\lambda $ is nonzero constant(\cite{JM}). The curve $\left( \beta \right) $
is Mannheim partner curve if and only if 
\begin{equation*}
\frac{d\overline{\tau }}{d\overline{s}}=\frac{\overline{\kappa }}{\lambda }%
\left( 1+\lambda ^{2}\overline{\tau }^{2}\right)
\end{equation*}%
where $\overline{\kappa }$, $\overline{\tau }$ are curveture of the curve $%
\left( \beta \right) $ and $\lambda $ is nonzero constant(\cite{JM},\cite%
{Liu}). After Liu and Wang paper (\cite{Liu}), many geometer have studied a
Mannheim curve and Mannheim partner curve(\cite{SO}, \cite{KO}, \cite{IS}, 
\cite{OZT}). In this paper, we define $V-$Mannheim curve and $V-$Mannheim
partner curve we give characterization of the $V-$Mannheim curve and the $V-$%
Mannheim partner curve.

\section{\textbf{Preliminaries}}

IIn 3-Euclidean spaces, let $\gamma :I\longrightarrow 
\mathbb{R}
^{3}(s\rightarrow \gamma (s))$ be a regular curve with unit speed coordinate
neighborhood $(I,\gamma )$. Derivation of the Serret-Frenet vectors field
given by 
\begin{equation*}
\left( 
\begin{array}{c}
T%
{\acute{}}
\\ 
N^{%
{\acute{}}%
} \\ 
B^{%
{\acute{}}%
}%
\end{array}%
\right) =\left( 
\begin{array}{ccc}
0 & \kappa & 0 \\ 
-\kappa & 0 & \tau \\ 
0 & -\tau & 0%
\end{array}%
\right) \left( 
\begin{array}{c}
T \\ 
N \\ 
B%
\end{array}%
\right)
\end{equation*}%
where $\left\{ T,N,B\right\} $ is ortonormal Serret-Frenet frame of the
curve and $\kappa $ and \ $\tau $ are curvatures of the curves(\cite{F}, 
\cite{S}). let $\gamma $ be a regular spherical curve. Hence we can define a
curve $\alpha :I\longrightarrow 
\mathbb{R}
^{3}(s\rightarrow \alpha (s))K$ as 
\begin{equation}
\alpha (s)=\int S_{M}(s)\gamma (s)ds
\end{equation}%
where $S_{M}:I\longrightarrow 
\mathbb{R}
$ $\left( s\rightarrow S_{M}(s)\right) $ is differentiable function (\cite%
{Cam}). \ The curve $\alpha $ is spherical curve if and only if 
\begin{equation*}
S_{M}(s)=\left\Vert \gamma ^{\prime }(s)\right\Vert \cos \left( \overset{s}{%
\underset{0}{\dint }}\frac{\det (\gamma (s),\gamma ^{\prime }(s),\gamma
^{\prime \prime }(s))}{\left\Vert \gamma ^{\prime }(s)\right\Vert ^{2}}%
ds+\theta _{0}\right) .
\end{equation*}%
(\cite{Cam}). So, there is \ $S_{T}:I\longrightarrow 
\mathbb{R}
$ differentiable function such that 
\begin{equation*}
\left\Vert \int S_{T}(s)\gamma ^{\prime }(s)ds\right\Vert =1
\end{equation*}%
where 
\begin{equation}
S_{T}(s)=\kappa (s)\cos \left( \overset{s}{\underset{0}{\dint }}\tau
(u)du+\theta _{0}\right)  \label{4.8}
\end{equation}%
If we define a curve $K$ with coordinate neighborhood $(I,\beta )$ such that 
\begin{equation*}
\beta ^{\prime }(s)=\int S_{T}(s)\gamma ^{\prime }(s)ds
\end{equation*}%
then we have 
\begin{equation}
\beta ^{\prime \prime }(s)=S_{T}(s)\gamma ^{\prime }(s)  \label{4.9}
\end{equation}%
Arc-lenght parameter of $M$ and $K$ are same. Let $\left\{ \overline{T},%
\overline{N},\overline{B},\overline{\kappa },\overline{\tau }\right\} $ be
Serret-Frenet apparatus of the curves where 
\begin{equation}
S_{T}(s)=\overline{\kappa }(s)=\kappa (s)\cos \left( \overset{s}{\underset{0}%
{\dint }}\tau (u)du+\theta _{0}\right)  \label{4.91}
\end{equation}%
and 
\begin{equation}
\overline{\tau }(s)=\kappa (s)\sin \left( \overset{s}{\underset{0}{\dint }}%
\tau (u)du+\theta _{0}\right) \text{.}  \label{4.10}
\end{equation}%
From (\ref{4.9}), we have 
\begin{equation}
\overline{N}(s)=\varepsilon T(s)  \label{4.11}
\end{equation}%
where $\varepsilon =\pm 1$. From equation (\ref{4.9}) and (\ref{4.11}), we
can see that principal normal of $K$ and tangent of $M$ is colinear.

\section{\textbf{\ V-Mannheim Curve}}

In 3-Euclidean spaces, let $\gamma :I\longrightarrow 
\mathbb{R}
^{3}(s\rightarrow \gamma (s))$ be a regular curve with unit speed coordinate
neighborhood $(I,\gamma )$ and $\left\{ T,N,B\right\} $ be ortonormal frame
of the curve and $\kappa $ and \ $\tau $ be curvatures of the curves. Choi
and Kim defined a unit vector field $V$ given by 
\begin{equation*}
V(s)=u(s)T(s)+v(s)N(s)+w(s)B(s)
\end{equation*}%
Integral curve of $V$ is define by $\gamma _{V}(s)=\int V(s)ds$ where $u,v,w$
are founctions from $I$ to $%
\mathbb{R}
$(\cite{Choi}). So we can defined a curve $\beta $ as 
\begin{equation}
\beta (s)=\int V(s)ds+\lambda (s)N(s)  \label{a}
\end{equation}%
where $\lambda :I\longrightarrow 
\mathbb{R}
$ $\left( s\rightarrow \lambda (s)\right) $ is diferentiable founction. Let $%
\left\{ \overline{T},\overline{N},\overline{B}\right\} $ be ortonormal frame
of the curve and $\overline{\kappa }$ and \ $\overline{\tau }$ be curvatures
of the curves $\beta $. Under the above notation, we can give the following
theorems and definitions.

\begin{definition}
If $\left\{ N,\overline{B}\right\} $ is lineer dependent $\left( \overline{B}%
=\epsilon N,\epsilon =\pm 1\right) $, then it is said that the curve $\gamma 
$(respectively $\beta )$ is said that $V-$Mannheim curve ( $V-$Mannheim
partner curve). By similar method, Camc\i\ defined a $V-$Bertrant curve (%
\cite{CAM}).

\begin{theorem}
The curve $\gamma $ is a $V-$Mannheim curve if and only if it is satisfy
that 
\begin{equation}
u\kappa -w\tau =\lambda \left( \kappa ^{2}+\tau ^{2}\right)  \label{3.7}
\end{equation}%
and 
\begin{equation}
\lambda (s)=-\int v(s)ds  \label{3.8}
\end{equation}%
where $\lambda _{0}$ is constant.

\begin{proof}
If $M$ is a $V-$Mannheim curve, then $V-$Mannheim partner curve of $M$ is
equal to 
\begin{equation}
\beta (s)=\overset{s}{\underset{0}{\dint }}V(u)du+\lambda (s)N(s)
\label{3.9}
\end{equation}%
and$\left\{ N,\overline{B}\right\} $ is a lineer dependent. \ If we derivate
the equation (\ref{3.9}), we have 
\begin{equation*}
\frac{d\overline{s}}{ds}\overline{T}=\left( u-\lambda \kappa \right)
T+\left( \lambda ^{\prime }+v\right) N+\left( w+\lambda \tau \right) B.
\end{equation*}%
Since $\left\{ N,\overline{B}\right\} $ is a lineer dependent, we have 
\begin{equation*}
\lambda (s)=-\int v(s)ds
\end{equation*}%
hence we get 
\begin{equation}
\overline{T}=\frac{ds}{d\overline{s}}\left( u-\lambda \kappa \right) T+\frac{%
ds}{d\overline{s}}\left( w+\lambda \tau \right) B=\cos \theta (s)T+\sin
\theta (s)B  \label{3.10}
\end{equation}%
where $\cos \theta (s)=\frac{ds}{d\overline{s}}\left( u-\lambda \kappa
\right) $ and $\sin \theta (s)=\frac{ds}{d\overline{s}}\left( v+\lambda \tau
\right) $. So we have 
\begin{equation}
\tan \theta (s)=\frac{v+\lambda \tau }{u-\lambda \kappa }  \label{3.11}
\end{equation}%
If we derivate the equation (\ref{3.10}), we have 
\begin{equation}
\frac{d\overline{s}}{ds}\overline{\kappa }\overline{N}=-\theta ^{\prime
}\sin \theta T+\left( \kappa \cos \theta -\tau \sin \theta \right) N+\theta
^{\prime }\cos \theta B  \label{3.12}
\end{equation}%
Since $\left\{ N,\overline{B}\right\} $ is a lineer dependent, \ we get 
\begin{equation}
\kappa \cos \theta -\tau \sin \theta =0  \label{3.13}
\end{equation}%
From equation (\ref{3.11}) and (\ref{3.13}), we have 
\begin{equation}
u\kappa -w\tau =\lambda \left( \kappa ^{2}+\tau ^{2}\right) .  \label{3.14}
\end{equation}%
Conversely, we define a curve as 
\begin{equation}
\beta (s)=\int V(s)ds+\lambda (s)N(s)  \label{3.15}
\end{equation}%
where $\lambda :I\longrightarrow 
\mathbb{R}
$ $\left( s\rightarrow \lambda (s)\right) $ is diferentiable founction such
that $\lambda (s)=-\int v(s)ds$ . If we derivate the equation (\ref{3.15}),
we have%
\begin{equation}
\frac{d\overline{s}}{ds}\overline{T}=\left( u-\lambda \kappa \right)
T+\left( w+\lambda \tau \right) B.  \label{3.16}
\end{equation}%
From equation (\ref{3.16}), we get 
\begin{equation}
\overline{T}=\frac{ds}{d\overline{s}}\left( u-\lambda \kappa \right) T+\frac{%
ds}{d\overline{s}}\left( w+\lambda \tau \right) B=\cos \theta (s)T+\sin
\theta (s)B  \label{3.17}
\end{equation}%
where $\cos \theta (s)=\frac{ds}{d\overline{s}}\left( u-\lambda \kappa
\right) $ and $\sin \theta (s)=\frac{ds}{d\overline{s}}\left( v+\lambda \tau
\right) $. From equation (\ref{3.9}) and (\ref{3.12}) we have 
\begin{equation}
\tan \theta (s)=\frac{v+\lambda \tau }{u-\lambda \kappa }  \label{3.18}
\end{equation}%
Using equation (\ref{3.17}), we obtain 
\begin{equation}
\frac{d\overline{s}}{ds}\overline{\kappa }\overline{N}=-\theta ^{\prime
}\sin \theta T+\left( \kappa \cos \theta -\tau \sin \theta \right) N+\theta
^{\prime }\cos \theta B  \label{3.19}
\end{equation}%
From equation (\ref{3.18}) and (\ref{3.19}), we get%
\begin{equation}
\kappa \cos \theta -\tau \sin \theta =\frac{\cos \theta }{u-\lambda \kappa }%
\left( u\kappa -v\tau -\lambda \left( \kappa ^{2}+\tau ^{2}\right) \right) =0
\label{3.20}
\end{equation}%
So, we have 
\begin{equation}
\overline{N}=-\sin \theta T+\cos \theta B  \label{3.21}
\end{equation}%
and 
\begin{equation}
d\overline{s}\text{ }\overline{\kappa }=\theta ^{\prime }ds=d\theta
\label{3.22}
\end{equation}%
From equation (\ref{3.17}) and (\ref{3.21}), we see that $\left\{ N,%
\overline{B}\right\} $ is a lineer dependent.
\end{proof}

\begin{corollary}
If $u(s)=1,v(s)=w(s)=0$, we have Mannheim ($T$-Mannheim)\ curve. From
equation (\ref{3.8}), $\lambda $ is constant. \ From equation (\ref{3.7}),
we have $\ \kappa =\lambda \left( \kappa ^{2}+\tau ^{2}\right) $. If $%
v(s)=1,u(s)=w(s)=0$, we have $B$-Mannheim curve. Using equation (\ref{3.7}),
we have $\tau =-\lambda \left( \kappa ^{2}+\tau ^{2}\right) $ where $\lambda 
$ is constant.

\begin{theorem}
In 3-Euclidean spaces, let $M$ be a regular curve with coordinate
neighborhood $(I,\gamma )$ and "$s$" be arcparameter of the curve. Let $%
\left\{ T,N,B\right\} $ be ortonormal frame of the curve and $\kappa $ and \ 
$\tau $ be curvatures of the curves . $M$ is a $V-$Mannheim curve if and
only if it is satisfy that 
\begin{equation}
2u\kappa (s)=\frac{1}{\lambda }\left[ u^{2}+u\sqrt{u^{2}+w^{2}}\cos \left(
2\theta +\theta _{0}\right) \right]  \label{3.23}
\end{equation}%
and 
\begin{equation}
2w\tau (s)=\frac{1}{\lambda }\left[ -w^{2}+w\sqrt{u^{2}+w^{2}}\sin \left(
2\theta +\theta _{0}\right) \right]  \label{3.24}
\end{equation}%
where $\cos \theta _{0}=\frac{u}{\sqrt{u^{2}+w^{2}}}$and $\sin \theta _{0}=%
\frac{w}{\sqrt{u^{2}+w^{2}}}$.
\end{theorem}
\end{corollary}

\begin{proof}
If $M$ is a $V-$Mannheim curve, From equation (\ref{3.7}), then we have $%
u\kappa -w\tau =\lambda \left( \kappa ^{2}+\tau ^{2}\right) $. So we have 
\begin{equation}
\kappa (s)=\sqrt{\frac{u\kappa -w\tau }{\lambda }}\cos \theta  \label{3.25}
\end{equation}%
and%
\begin{equation}
\tau (s)=\sqrt{\frac{u\kappa -w\tau }{\lambda }}\sin \theta  \label{3.26}
\end{equation}%
From equation (\ref{3.25}) and (\ref{3.26}), we have%
\begin{equation}
u\kappa -w\tau =\frac{1}{\lambda }\left( u\cos \theta -w\sin \theta \right)
^{2}  \label{3.27}
\end{equation}%
and 
\begin{equation}
u\kappa +w\tau =\frac{1}{\lambda }\left( u^{2}(\cos \theta )^{2}-w^{2}(\sin
\theta )^{2}\right)  \label{3.28}
\end{equation}%
From equation (\ref{3.27}) and (\ref{3.28}), we have equation (\ref{3.23})
and (\ref{3.24})
\end{proof}

\begin{corollary}
If $u(s)=1,v(s)=w(s)=0$, we have Mannheim$\left( T\text{-Mannheim}\right) $
curve. From equation (\ref{3.8}), $\lambda $ is constant. \ In this case, we
have 
\begin{equation}
\kappa (s)=R(\cos \theta )^{2}  \label{3.29}
\end{equation}%
and 
\begin{equation}
\tau (s)=R\cos \theta \sin \theta  \label{3.30}
\end{equation}%
where $R=\frac{1}{\lambda }$ is constant. From equation (\ref{3.7}), we have 
$\ \kappa =\lambda \left( \kappa ^{2}+\tau ^{2}\right) $. If $%
w(s)=1,u(s)=v(s)=0$, we have $B$-Bertrant curve. From equation (\ref{3.8}), $%
\lambda $ is constant. In this case we have 
\begin{equation}
\kappa (s)=R\cos \theta \sin \theta  \label{3.31}
\end{equation}%
and 
\begin{equation}
\tau (s)=R\left( \cos \theta \right) ^{2}  \label{3.32}
\end{equation}%
where $R=\frac{1}{\lambda }$ is constant.
\end{corollary}
\end{theorem}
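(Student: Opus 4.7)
The plan is to derive the claimed trigonometric formulas from the scalar characterization (\ref{3.7}) of the previous theorem, using a polar-type parametrization of the pair $(\kappa,\tau)$ in terms of the angle $\theta$ that was already introduced in the construction of the Mannheim partner curve. Since both the hypothesis and the conclusion are phrased as ``$V$-Mannheim $\Leftrightarrow$ (identity in $\kappa,\tau$)'', my strategy is to show that the new pair (\ref{3.23})--(\ref{3.24}) is algebraically equivalent to the identity (\ref{3.7}) once $\theta$ is eliminated, so the full iff follows by transitivity through the previous theorem.

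My first step is to invoke the preceding theorem and reduce matters to showing the equivalence between (\ref{3.7}) and the system (\ref{3.23})--(\ref{3.24}). From equation (\ref{3.13}) in that earlier proof the angle $\theta$ satisfies $\kappa\cos\theta-\tau\sin\theta=0$, so the pair $(\kappa,\tau)$ points in the direction $(\cos\theta,\sin\theta)$. Setting $R=\sqrt{(u\kappa-w\tau)/\lambda}$ and rearranging (\ref{3.7}) as $\kappa^{2}+\tau^{2}=R^{2}$ then forces the polar form $\kappa=R\cos\theta$, $\tau=R\sin\theta$. Substituting this back into $u\kappa-w\tau$ and $u\kappa+w\tau$ and using the defining equation for $R$ produces two intermediate identities, namely $u\kappa-w\tau=\tfrac{1}{\lambda}(u\cos\theta-w\sin\theta)^{2}$ and $u\kappa+w\tau=\tfrac{1}{\lambda}(u^{2}\cos^{2}\theta-w^{2}\sin^{2}\theta)$, the latter being an immediate algebraic factorization of the former.

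My second step is the double-angle manipulation. Adding these two intermediate identities isolates $2u\kappa$ while subtracting them isolates $-2w\tau$; applying the standard power-reduction formulas $2\cos^{2}\theta=1+\cos 2\theta$, $2\sin^{2}\theta=1-\cos 2\theta$, $2\sin\theta\cos\theta=\sin 2\theta$ rewrites the quadratic expressions in $\cos\theta,\sin\theta$ as affine combinations of $1$, $\cos 2\theta$, $\sin 2\theta$. The auxiliary angle $\theta_{0}$ with $\cos\theta_{0}=u/\sqrt{u^{2}+w^{2}}$ and $\sin\theta_{0}=w/\sqrt{u^{2}+w^{2}}$ then collapses the combinations $u\cos 2\theta-w\sin 2\theta$ and $u\sin 2\theta+w\cos 2\theta$ into $\sqrt{u^{2}+w^{2}}\cos(2\theta+\theta_{0})$ and $\sqrt{u^{2}+w^{2}}\sin(2\theta+\theta_{0})$, respectively, by a single application of the cosine- and sine-of-sum formulas. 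This yields (\ref{3.23}) and (\ref{3.24}). For the converse, the same chain is simply read backwards: adding and subtracting (\ref{3.23}) and (\ref{3.24}) and undoing the addition formula recovers the two intermediate identities, and comparison with the polar form returns (\ref{3.7}), hence $V$-Mannheim via the previous theorem.

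The main obstacle is notational rather than conceptual: I must verify that the angle $\theta$ appearing in (\ref{3.23})--(\ref{3.24}) is literally the one produced by the decomposition $\overline{T}=\cos\theta\,T+\sin\theta\,B$ in the previous theorem, and that the square root defining $R$ carries a sign compatible with the scalars $\tfrac{ds}{d\overline{s}}(u-\lambda\kappa)$ and $\tfrac{ds}{d\overline{s}}(w+\lambda\tau)$ determined by (\ref{3.11}). Once that compatibility is recorded, the remainder of the argument is pure bookkeeping with double-angle and addition formulas, and requires no new geometric input beyond what the previous theorem already supplies.
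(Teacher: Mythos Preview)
Your proposal is correct and follows essentially the same route as the paper: start from the scalar relation (\ref{3.7}), pass to the polar form $\kappa=R\cos\theta$, $\tau=R\sin\theta$ with $R=\sqrt{(u\kappa-w\tau)/\lambda}$, obtain the two intermediate identities (\ref{3.27})--(\ref{3.28}), and then add, subtract, and apply the double-angle and addition formulas with the auxiliary angle $\theta_{0}$ to reach (\ref{3.23})--(\ref{3.24}). The only differences are cosmetic: you spell out the double-angle bookkeeping and the converse direction more explicitly than the paper does, and you flag the sign/compatibility check for $\theta$, which the paper leaves implicit.
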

\end{definition}

In 3-Euclidean spaces, let $M$, $K$ be a regular curve with unit coordinate
neighborhood $(I,\alpha )$ and $(I,\beta )$ and "$s$" and "$\overline{s}$"
be arcparameter of $M$ and $K,$ respectively. Let $\left( T,N,B,\kappa ,\tau
\right) $ and $\left( \overline{T},\overline{N},\overline{B},\overline{%
\kappa },\overline{\tau }\right) $ be Frenet apparatus of $M$ and $K,$
respectively. Let $V=uT+vN+wB$ be unit vector field where $u,v,w$ are
constant.

\begin{theorem}
Under the above notation, the curve $\left( \beta \right) $ is a $V-$%
Mannheim partner curve if and only if 
\begin{equation}
\frac{d\overline{\tau }}{d\overline{s}}=\frac{v\overline{\tau }\sqrt{%
1+\lambda ^{2}\overline{\tau }^{2}}}{\lambda \sqrt{1-v^{2}}}-\frac{\overline{%
\kappa }}{\lambda }\left( 1+\lambda ^{2}\overline{\tau }^{2}\right)
\label{b}
\end{equation}%
where $\overline{\kappa }$, $\overline{\tau }$ are curveture of the curve $%
\left( \beta \right) $ and $\lambda (s)=-\int v(s)ds$.

\begin{proof}
Let $\left( \beta \right) $ be $V-$Mannheim partner curve of $\left( \alpha
\right) $. So we can write $\overline{B}=N$. From equation (\ref{a}), we can
write 
\begin{equation}
\overset{s}{\underset{0}{\dint }}V(u)du=\beta (s)-\lambda \overline{B}(s)
\label{3.33}
\end{equation}%
If we derivate the equation (\ref{3.33}), we have 
\begin{equation*}
\frac{ds}{d\overline{s}}\left( uT+vN+wB\right) =\overline{T}+\lambda 
\overline{\tau }\overline{N}-\frac{d\lambda }{d\overline{s}}\overline{B}(s)
\end{equation*}%
where $\overline{B}=N$ and $\lambda (s)=-\int v(s)ds$. So we obtain%
\begin{equation}
\frac{ds}{d\overline{s}}\left( uT+wB\right) =\overline{T}+\lambda \overline{%
\tau }\overline{N}  \label{3.34}
\end{equation}%
where 
\begin{equation}
\overline{T}=\frac{ds}{d\overline{s}}\left( u-\lambda \kappa \right) T+\frac{%
ds}{d\overline{s}}\left( w+\lambda \tau \right) B=\cos \theta T+\sin \theta B
\label{3.35}
\end{equation}%
and 
\begin{equation}
\overline{N}=\sin \theta T-\cos \theta B  \label{3.36}
\end{equation}%
From equation (\ref{3.34}), (\ref{3.35}) and (\ref{3.36}), we have 
\begin{equation}
\frac{ds}{d\overline{s}}u=\cos \theta +\lambda \overline{\tau }\sin \theta
\label{3.37}
\end{equation}%
and 
\begin{equation}
\frac{ds}{d\overline{s}}w=\sin \theta -\lambda \overline{\tau }\cos \theta
\label{3.38}
\end{equation}%
Using equations (\ref{3.37}) and (\ref{3.38}), we obtain 
\begin{equation}
\frac{w}{u}=\frac{\sin \theta -\lambda \overline{\tau }\cos \theta }{\cos
\theta +\lambda \overline{\tau }\sin \theta }  \label{3.39}
\end{equation}%
From equation (\ref{3.39}), we can easily see that 
\begin{equation}
\overline{\tau }=\frac{1}{\lambda }\frac{u\sin \theta -w\cos \theta }{w\sin
\theta +u\cos \theta }  \label{3.40}
\end{equation}%
where 
\begin{equation*}
\left( u\sin \theta -w\cos \theta \right) ^{2}+\left( w\sin \theta +u\cos
\theta \right) ^{2}=u^{2}+w^{2}=1-v^{2}
\end{equation*}%
So, there is $\phi :I\longrightarrow 
\mathbb{R}
$ $\left( s\rightarrow \phi (s)\right) $ diferentiable founction such that 
\begin{equation}
\sqrt{1-v^{2}}\cos \phi =w\sin \theta +u\cos \theta  \label{3.41}
\end{equation}%
and 
\begin{equation}
\sqrt{1-v^{2}}\sin \phi =u\sin \theta -w\cos \theta  \label{3.42}
\end{equation}%
From equation (\ref{3.41}) and (\ref{3.42}), we have 
\begin{equation}
\overline{\tau }=\frac{1}{\lambda }\tan \phi  \label{3.43}
\end{equation}%
and 
\begin{equation}
\frac{d\theta }{d\overline{s}}=\frac{d\phi }{d\overline{s}}=-\overline{%
\kappa }  \label{3.44}
\end{equation}%
From equation (\ref{3.43}) and (\ref{3.44}), we obtain 
\begin{equation}
\frac{d\overline{\tau }}{d\overline{s}}=\frac{v\overline{\tau }\sqrt{%
1+\lambda ^{2}\overline{\tau }^{2}}}{\lambda \sqrt{1-v^{2}}}-\frac{\overline{%
\kappa }}{\lambda }\left( 1+\lambda ^{2}\overline{\tau }^{2}\right) \text{.}
\label{3.45}
\end{equation}%
Conversely, we can define a curve as 
\begin{equation}
\overset{s}{\underset{0}{\dint }}V(u)du=\beta (s)-\lambda \overline{B}(s)
\label{3.46}
\end{equation}%
If we derivate the equation (\ref{3.46}), we have 
\begin{equation}
\frac{ds}{d\overline{s}}\left( uT+vN+wB\right) =\overline{T}+\lambda 
\overline{\tau }\overline{N}-\frac{d\lambda }{d\overline{s}}\overline{B}
\label{3.47}
\end{equation}%
From equation (\ref{3.47}), we obtain%
\begin{equation}
\frac{ds}{d\overline{s}}=\sqrt{\frac{1+\lambda ^{2}\overline{\tau }^{2}}{%
1-v^{2}}}  \label{3.48}
\end{equation}%
and 
\begin{equation}
\frac{ds}{d\overline{s}}\left( uT+wB\right) =\overline{T}+\lambda \overline{%
\tau }\overline{N}  \label{3.49}
\end{equation}%
If we derivate the equation (\ref{3.49}), we have $\frac{d(\lambda \overline{%
\tau })}{d\overline{s}}\overline{\tau }+\frac{d(\overline{\tau })}{d%
\overline{s}}\lambda $ 
\begin{equation}
\frac{d\overline{s}}{ds}\frac{d^{2}s}{d\overline{s}^{2}}\left( \overline{T}%
+\lambda \overline{\tau }\overline{N}\right) +\left( \frac{ds}{d\overline{s}}%
\right) ^{2}\left( u\kappa -w\tau \right) N=-\lambda \overline{\tau }%
\overline{\kappa }\overline{T}+\left( \overline{\kappa }+\left( \frac{%
d(\lambda \overline{\tau })}{d\overline{s}}\right) \right) \overline{N}%
+\lambda \overline{\tau }^{2}\overline{B}  \label{3.50}
\end{equation}%
Using (\ref{3.47}), we obtain%
\begin{equation}
\frac{d\overline{s}}{ds}\frac{d^{2}s}{d\overline{s}^{2}}=-\lambda \overline{%
\tau }\overline{\kappa }  \label{3.51}
\end{equation}%
and 
\begin{equation}
\overline{\kappa }+\left( \frac{d(\lambda \overline{\tau })}{d\overline{s}}%
\right) =-\lambda ^{2}\overline{\tau }^{2}\overline{\kappa }  \label{3.52}
\end{equation}%
From equation (\ref{3.50}), (\ref{3.51}) and (\ref{3.52}), we easily can see
that $\left\{ N,\overline{B}\right\} $ is a lineer dependent.
\end{proof}
\end{theorem}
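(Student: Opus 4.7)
The plan is to attack the two directions separately, starting from the defining relation (\ref{a}) and pushing the $V$-Mannheim partner hypothesis $\overline{B}=\epsilon N$ through the Frenet apparatus of $\beta$.

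For the forward direction, I would first rewrite (\ref{a}) as $\int_{0}^{s}V(u)\,du=\beta(s)-\lambda\overline{B}(s)$ and differentiate with respect to the arclength parameter $\overline{s}$ of $\beta$. Substituting $V=uT+vN+wB$ on the left and the Frenet formulas for $\overline{T},\overline{N},\overline{B}$ on the right produces an equality of vectors in the Frenet frame of $\gamma$. The choice $\lambda(s)=-\int v(s)\,ds$ is forced in order to kill the $N$-component, which then gives the clean relation
\begin{equation*}
\tfrac{ds}{d\overline{s}}(uT+wB)=\overline{T}+\lambda\overline{\tau}\,\overline{N},
\end{equation*}
with $\overline{T}$ and $\overline{N}$ lying in $\mathrm{span}\{T,B\}$. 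Writing them as $\overline{T}=\cos\theta\,T+\sin\theta\,B$ and $\overline{N}=\sin\theta\,T-\cos\theta\,B$ reduces everything to the two scalar equations (\ref{3.37})--(\ref{3.38}).

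The next step is to isolate $\overline{\tau}$. I would eliminate $ds/d\overline{s}$ between the two scalar equations to obtain the expression (\ref{3.40}) for $\overline{\tau}$ in terms of $\theta$, and then introduce the auxiliary angle $\phi$ via (\ref{3.41})--(\ref{3.42}), using the identity $(u\sin\theta-w\cos\theta)^{2}+(u\cos\theta+w\sin\theta)^{2}=u^{2}+w^{2}=1-v^{2}$ to justify its existence. This gives the compact form $\overline{\tau}=\tfrac{1}{\lambda}\tan\phi$. To finish the forward direction I would differentiate this relation with respect to $\overline{s}$, using $\lambda'=-v$ and the key identity $d\phi/d\overline{s}=d\theta/d\overline{s}=-\overline{\kappa}$ (which follows from differentiating $\overline{T}=\cos\theta\,T+\sin\theta\,B$ and projecting onto $\overline{N}$). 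A short calculation, with $\sec^{2}\phi=1+\lambda^{2}\overline{\tau}^{2}$ and $\sin\phi=\lambda\overline{\tau}/\sqrt{1+\lambda^{2}\overline{\tau}^{2}}$, converts the derivative into the announced formula (\ref{b}).

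For the converse I would take (\ref{b}) as the hypothesis, define $\beta$ through (\ref{a}) with $\lambda=-\int v\,ds$, and show that the construction is consistent: differentiating once produces $\tfrac{ds}{d\overline{s}}(uT+wB)=\overline{T}+\lambda\overline{\tau}\,\overline{N}$ as before, and squaring norms yields (\ref{3.48}). Differentiating a second time and using (\ref{b}) to rewrite $d\overline{\tau}/d\overline{s}$ should force the normal component of $\beta''$ along $\gamma$'s frame to lie entirely in $\mathrm{span}\{T,B\}$, which is precisely the statement that $\overline{B}\parallel N$.

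The main obstacle I anticipate is the bookkeeping in the angle substitution step: verifying cleanly that $d\theta/d\overline{s}=d\phi/d\overline{s}=-\overline{\kappa}$ requires differentiating (\ref{3.41})--(\ref{3.42}) while simultaneously using the original scalar equations and the Frenet derivatives of $T$ and $B$ along $\gamma$; keeping the signs and the factor $ds/d\overline{s}$ straight so that the square root $\sqrt{1+\lambda^{2}\overline{\tau}^{2}}$ emerges with the correct coefficient in (\ref{b}) is the delicate point. Everything else is routine linear algebra in the Frenet frame.
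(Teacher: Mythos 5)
Your plan follows the paper's proof essentially step for step: differentiate the relation $\int_0^s V\,du=\beta-\lambda\overline{B}$, use $\lambda=-\int v\,ds$ to cancel the $N$-component, write $\overline{T},\overline{N}$ in terms of an angle $\theta$, eliminate $ds/d\overline{s}$ to reach $\overline{\tau}=\frac{1}{\lambda}\tan\phi$ with $d\phi/d\overline{s}=d\theta/d\overline{s}=-\overline{\kappa}$, and differentiate using $ds/d\overline{s}=\sqrt{(1+\lambda^2\overline{\tau}^2)/(1-v^2)}$; the converse by a second differentiation is also exactly the paper's route. There is no substantive difference in approach.
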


\begin{corollary}
If $v=0$, then we can see that $\lambda $ is non zero constant and $%
u^{2}+w^{2}=1-v^{2}=1$. From equation (\ref{b}), $\left( \beta \right) $ is
a Mannheim partner curve.
\end{corollary}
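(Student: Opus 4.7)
The plan is to simply specialize the preceding theorem to the case $v=0$ and recognize that equation (\ref{b}) reduces to the classical Mannheim partner-curve characterization recalled in the introduction.

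First I would deduce that $\lambda$ is constant. By equation (\ref{3.8}) we have $\lambda(s) = -\int v(s)\,ds$, so $v\equiv 0$ immediately gives $\lambda'(s)\equiv 0$; thus $\lambda$ is a nonzero constant (nonzero by hypothesis of the construction in (\ref{a})). Next, since $V = uT+vN+wB$ is assumed to be a \emph{unit} vector field, we always have $u^2+v^2+w^2 = 1$, and setting $v=0$ yields $u^2+w^2 = 1-v^2 = 1$, which is the second stated identity.

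The substantive part is to insert $v=0$ into equation (\ref{b}). The first term on the right-hand side,
\begin{equation*}
\frac{v\,\overline{\tau}\sqrt{1+\lambda^{2}\overline{\tau}^{2}}}{\lambda\sqrt{1-v^{2}}},
\end{equation*}
vanishes because of the factor $v$ in the numerator (the denominator $\sqrt{1-v^{2}}$ evaluates to $1$ and therefore does not cause an indeterminate form). What remains is
\begin{equation*}
\frac{d\overline{\tau}}{d\overline{s}} = -\frac{\overline{\kappa}}{\lambda}\bigl(1+\lambda^{2}\overline{\tau}^{2}\bigr),
\end{equation*}
which, up to a harmless sign (absorbed by replacing $\lambda$ with $-\lambda$, exactly as in the introduction where the constant $\lambda$ is only declared nonzero), is precisely the characterization of a Mannheim partner curve recalled in the Introduction after (\cite{JM}, \cite{Liu}). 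I would then conclude that $(\beta)$ is a Mannheim partner curve in the classical sense.

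There is no real obstacle here; the only subtlety worth a sentence is the sign convention. I would explicitly point out that the sign difference between (\ref{b}) with $v=0$ and the classical formula reflects the orientation choice in the definition of $\lambda$ (via $\beta = \int V\,ds + \lambda N$ versus the opposite sign in the classical Mannheim construction), and that since $\lambda$ is an arbitrary nonzero constant, both conventions define the same class of curves. That observation closes the corollary.
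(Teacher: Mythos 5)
Your proposal is correct and matches the paper's (implicit) argument: the paper offers no separate proof for this corollary, treating it as immediate from substituting $v=0$ into equation (\ref{b}), which is exactly what you do. Your extra remark about the sign discrepancy between the specialized formula $\frac{d\overline{\tau}}{d\overline{s}}=-\frac{\overline{\kappa}}{\lambda}\left(1+\lambda^{2}\overline{\tau}^{2}\right)$ and the classical characterization from the Introduction, resolved by replacing $\lambda$ with $-\lambda$, addresses a point the paper silently glosses over and is a welcome clarification rather than a deviation.
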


If $u,v,w$ are founctions from $I$ to $%
\mathbb{R}
$, then we can give following theorem.

\begin{theorem}
The curve $\left( \beta \right) $ is a $V-$Mannheim partner curve if and
only if 
\begin{equation}
\frac{d\overline{\tau }}{d\overline{s}}=\frac{v\overline{\tau }\sqrt{%
1+\lambda ^{2}\overline{\tau }^{2}}}{\lambda \sqrt{1-v^{2}}}+\frac{1}{%
\lambda }\left( \frac{d\left( \arctan \left( \frac{w}{u}\right) \right) }{d%
\overline{s}}-\overline{\kappa }\right) \left( 1+\lambda ^{2}\overline{\tau }%
^{2}\right)  \label{3.53}
\end{equation}%
where $\overline{\kappa }$, $\overline{\tau }$ are curveture of the curve $%
\left( \beta \right) $ and $\lambda (s)=-\int v(s)ds$.
\end{theorem}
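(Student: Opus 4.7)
The plan is to mirror the proof of Theorem~3.2, tracking the new contributions that appear because $u,v,w$ are now genuine functions of $s$ rather than constants. For the forward direction, I will assume $\overline{B}=N$ and begin from $\int V(u)\,du=\beta(s)-\lambda(s)\overline{B}(s)$ as in (3.46). Differentiating once and comparing the $N$-component on both sides gives $\lambda'=-v$, i.e.\ $\lambda(s)=-\int v(s)\,ds$. Matching the remaining $T$- and $B$-components against $\overline{T}=\cos\theta\,T+\sin\theta\,B$ and $\overline{N}=\sin\theta\,T-\cos\theta\,B$ reproduces equations (3.37)--(3.38), so that
\begin{equation*}
\frac{ds}{d\overline{s}}=\sqrt{\frac{1+\lambda^{2}\overline{\tau}^{2}}{1-v^{2}}},\qquad \overline{\tau}=\frac{1}{\lambda}\,\frac{u\sin\theta-w\cos\theta}{u\cos\theta+w\sin\theta}.
\end{equation*}

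Next I introduce the auxiliary angle $\phi$ via $\sqrt{1-v^{2}}\cos\phi=u\cos\theta+w\sin\theta$ and $\sqrt{1-v^{2}}\sin\phi=u\sin\theta-w\cos\theta$, so that $\lambda\overline{\tau}=\tan\phi$. Writing $u=\sqrt{1-v^{2}}\cos\alpha$ and $w=\sqrt{1-v^{2}}\sin\alpha$ with $\alpha=\arctan(w/u)$, a product-to-sum identity gives $\phi=\theta-\alpha$. This is precisely where the functional case departs from Theorem~3.2: because $u$ and $w$ now vary with $s$, so does $\alpha$, and differentiating in $\overline{s}$ yields
\begin{equation*}
\frac{d\phi}{d\overline{s}}=\frac{d\theta}{d\overline{s}}-\frac{d}{d\overline{s}}\arctan\!\left(\frac{w}{u}\right).
\end{equation*}
Crucially, the identity $d\theta/d\overline{s}=-\overline{\kappa}$ is preserved, since its derivation (cf.\ (3.12)--(3.13) and (3.19)--(3.22)) uses only the Frenet equations for $\gamma$, not the constancy of $u,v,w$.

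To conclude the forward direction I will differentiate $\lambda\overline{\tau}=\tan\phi$ in $\overline{s}$, substitute $d\lambda/d\overline{s}=-v\,(ds/d\overline{s})=-v\sqrt{(1+\lambda^{2}\overline{\tau}^{2})/(1-v^{2})}$ together with $\sec^{2}\phi=1+\lambda^{2}\overline{\tau}^{2}$, and solve algebraically for $d\overline{\tau}/d\overline{s}$; the extra $\arctan(w/u)$-term then appears exactly as the correction to Theorem~3.2's identity (3.45), and one checks that (3.53) reduces to (b) when $u,w$ are constant. The converse follows the template of Theorem~3.2: starting from the hypothesized ODE, define $\beta$ by (3.46), differentiate twice using the derived formulas for $ds/d\overline{s}$ and $d\phi/d\overline{s}$, and verify that the second derivative of $\beta$ is a scalar multiple of $N$ modulo the $\overline{T}$-direction, which forces $\overline{B}\parallel N$. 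The main obstacle I anticipate is the careful bookkeeping of signs in the $\phi$--$\theta$--$\alpha$ relation (since $\arctan(w/u)$ is only defined modulo $\pi$); the remainder is a routine computational extension of the constant-coefficient argument.
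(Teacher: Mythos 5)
The paper states this theorem with no proof at all, so there is nothing of the author's to compare against; your plan of re-running the proof of the preceding theorem and isolating the contribution of the now-variable angle $\alpha=\arctan (w/u)$ is the natural (indeed essentially the only) route, and the steps you claim survive really do survive: the derivation of (\ref{3.34})--(\ref{3.43}) nowhere uses constancy of $u,v,w$, the identity $u^{2}+w^{2}=1-v^{2}$ still holds pointwise because $V$ is a unit field, and $d\theta /d\overline{s}=\mp \overline{\kappa }$ depends only on the Frenet equations and the Mannheim condition, exactly as you say.

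The gap is that your own computation, carried to its end, does not produce (\ref{3.53}). With the conventions you actually adopt --- $\overline{N}=\sin \theta \,T-\cos \theta \,B$ as in (\ref{3.36}), hence $d\theta /d\overline{s}=-\overline{\kappa }$ as in (\ref{3.44}), and $\phi =\theta -\alpha $ --- differentiating $\lambda \overline{\tau }=\tan \phi $ and substituting $d\lambda /d\overline{s}=-v\sqrt{(1+\lambda ^{2}\overline{\tau }^{2})/(1-v^{2})}$ gives
\begin{equation*}
\frac{d\overline{\tau }}{d\overline{s}}=\frac{v\overline{\tau }\sqrt{1+\lambda ^{2}\overline{\tau }^{2}}}{\lambda \sqrt{1-v^{2}}}-\frac{1}{\lambda }\left( \frac{d\left( \arctan \left( \frac{w}{u}\right) \right) }{d\overline{s}}+\overline{\kappa }\right) \left( 1+\lambda ^{2}\overline{\tau }^{2}\right) ,
\end{equation*}
so the correction term enters with the \emph{opposite} sign to (\ref{3.53}); your assertion that it ``appears exactly as the correction'' is not borne out by your own setup. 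To land on (\ref{3.53}) as printed you must take the other orientation $\overline{N}=-\sin \theta \,T+\cos \theta \,B$ (the choice made in (\ref{3.21})), under which (\ref{3.37})--(\ref{3.38}) flip, $\lambda \overline{\tau }=\tan (\alpha -\theta )$ and $d\phi /d\overline{s}=d\alpha /d\overline{s}-\overline{\kappa }$; the reduction to (\ref{b}) when $\alpha $ is constant is unaffected either way. You must commit to one orientation and carry it through consistently rather than defer it, since as written the forward direction proves a formula contradicting the one claimed. Separately, the converse cannot be dismissed as routine: differentiating (\ref{3.49}) when $u,w$ vary introduces an extra term $\left( ds/d\overline{s}\right) ^{2}\left( u^{\prime }T+w^{\prime }B\right) $ that is not proportional to $\overline{T}+\lambda \overline{\tau }\overline{N}$ nor to $N$, and it is precisely the interaction of this term with the new $d\alpha /d\overline{s}$ contribution in the hypothesized ODE that must force $\overline{B}\parallel N$; that computation needs to be written out.
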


\section{\protect\bigskip \textbf{\ Generating Curve Of The Mannheim curve.}}

In 3-Euclidean spaces, let $M$, $K$ be a regular curve with unit coordinate
neighborhood $(I,\gamma )$ and $(I,\beta )$ and "$s$" be arcparameter of $M$
and $K$. Let $\left( T,N,B,\kappa ,\tau \right) $ and $\left( \overline{T},%
\overline{N},\overline{B},\overline{\kappa },\overline{\tau }\right) $ be
Frenet apparatus of $M$ and $K,$ respectively. \ For all $s\in I$, 
\begin{equation*}
\beta ^{\prime \prime }(s)=\overline{\kappa }\gamma ^{\prime }(s)
\end{equation*}%
and 
\begin{eqnarray}
\overline{\kappa }(s) &=&\kappa (s)\cos \left( \overset{s}{\underset{0}{%
\dint }}\tau (u)du+\varphi _{0}\right)  \label{6.24} \\
\overline{\tau }(s) &=&\kappa (s)\sin \left( \overset{s}{\underset{0}{\dint }%
}\tau (u)du+\varphi _{0}\right)  \notag
\end{eqnarray}%
\qquad

\begin{theorem}
In this case, $K$ is a $V-$Mannheim curve if and only if 
\begin{equation}
\kappa (s)=F(s)\cos \left( \varphi (s)+\phi (s)\right)  \label{6.25}
\end{equation}%
where $F(s)=\lambda \sqrt{u^{2}+w^{2}}$, $\cos \phi (s)=\frac{u}{\sqrt{%
u^{2}+w^{2}}}$, $\sin \phi (s)=\frac{w}{\sqrt{u^{2}+w^{2}}}$ and $\varphi
(s)=\overset{s}{\underset{0}{\dint }}\tau (u)du+\varphi _{0}$ .
\end{theorem}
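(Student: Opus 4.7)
The plan is to reduce the statement to a direct substitution into the characterization of $V$-Mannheim curves already proved in Theorem 3.1. Since the theorem asks whether $K$ (not $M$) is a $V$-Mannheim curve, the Frenet apparatus that should be plugged into Theorem 3.1 is the barred one. Thus I first rewrite the criterion $u\kappa - w\tau = \lambda(\kappa^{2}+\tau^{2})$ for the curve $K$, obtaining
\begin{equation*}
u\overline{\kappa} - w\overline{\tau} = \lambda\bigl(\overline{\kappa}^{2}+\overline{\tau}^{2}\bigr),
\end{equation*}
which is a necessary and sufficient condition for $K$ to be $V$-Mannheim (along with the usual $\lambda(s)=-\int v(s)ds$, which only constrains how $\lambda$ is generated and does not enter the curvature identity).

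Next I would substitute the explicit formulas (\ref{6.24}) that link $\overline{\kappa},\overline{\tau}$ to $\kappa,\tau$: namely $\overline{\kappa}=\kappa\cos\varphi$ and $\overline{\tau}=\kappa\sin\varphi$ with $\varphi(s)=\int_{0}^{s}\tau(u)du+\varphi_{0}$. By the Pythagorean identity the right-hand side collapses to $\lambda\kappa^{2}$, while the left-hand side becomes
\begin{equation*}
\kappa\bigl(u\cos\varphi - w\sin\varphi\bigr).
\end{equation*}
Introducing the auxiliary angle $\phi$ defined by $\cos\phi=u/\sqrt{u^{2}+w^{2}}$ and $\sin\phi=w/\sqrt{u^{2}+w^{2}}$, the factor $u\cos\varphi - w\sin\varphi$ rewrites via the cosine addition formula as $\sqrt{u^{2}+w^{2}}\cos(\varphi+\phi)$. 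Cancelling one factor of $\kappa$ (valid away from flex points, where the statement is vacuous) yields
\begin{equation*}
\sqrt{u^{2}+w^{2}}\,\cos\bigl(\varphi(s)+\phi(s)\bigr) = \lambda\,\kappa(s),
\end{equation*}
which, after the obvious rearrangement, is exactly (\ref{6.25}) with $F$ as defined.

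Since every step in the chain is an equivalence, the converse is obtained by simply reading the computation in reverse: assume $\kappa$ has the stated trigonometric form, reconstruct $u\overline{\kappa}-w\overline{\tau}=\lambda(\overline{\kappa}^{2}+\overline{\tau}^{2})$ via the same angle identity, and apply the sufficiency half of Theorem 3.1 to conclude that $K$ is $V$-Mannheim. The only real subtlety, and what I would check most carefully, is the bookkeeping of which Frenet frame plays the role of ``the curve'' in Theorem 3.1: here it is $K$, so the components $u,v,w$ of $V$ must be interpreted relative to $\{\overline{T},\overline{N},\overline{B}\}$, and the definition of $\phi$ is then with respect to these components. Beyond this conceptual point the argument is pure trigonometric substitution, so I do not anticipate any further obstacle.
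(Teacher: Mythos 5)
Your proof is correct and follows essentially the same route as the paper: apply the $V$-Mannheim criterion (\ref{3.7}) to $K$ in its barred Frenet apparatus, substitute (\ref{6.24}) so that $\overline{\kappa}^{2}+\overline{\tau}^{2}=\kappa^{2}$ and $u\overline{\kappa}-w\overline{\tau}=\kappa\left(u\cos\varphi-w\sin\varphi\right)=\kappa\sqrt{u^{2}+w^{2}}\cos\left(\varphi+\phi\right)$, cancel $\kappa$, and read the chain backwards for the converse. Note that your computation gives $F(s)=\sqrt{u^{2}+w^{2}}/\lambda$, which agrees with the paper's own proof but not with the $F(s)=\lambda\sqrt{u^{2}+w^{2}}$ printed in the theorem statement; that discrepancy is a typo in the statement, not a flaw in your argument.
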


\begin{proof}
i)Let $K$ be a Mannheim curve. In this case, there exist $\lambda \in $ $%
\mathbb{R}
$ such that $\left( \overline{\kappa }(s)\right) ^{2}+\left( \overline{\tau }%
(s)\right) ^{2}=\lambda \overline{\kappa }(s)$ , where "$s$" is arcparameter
of $K$. From equation (\ref{6.24}), we have 
\begin{equation}
u\kappa (s)\cos \varphi (s)-w\kappa (s)\sin \varphi (s)=\lambda \left(
\kappa (s)\right) ^{2}  \label{6.26}
\end{equation}%
From equation (\ref{6.26}), we obtain%
\begin{equation*}
\kappa (s)=F(s)\cos \left( \varphi (s)+\phi (s)\right)
\end{equation*}%
where $F(s)=\frac{\sqrt{u^{2}+w^{2}}}{\lambda }$, $\cos \phi (s)=\frac{u}{%
\sqrt{u^{2}+w^{2}}}$, $\sin \phi (s)=\frac{w}{\sqrt{u^{2}+w^{2}}}$ and $%
\varphi (s)=\overset{s}{\underset{0}{\dint }}\tau (u)du+\varphi _{0}$.
Conversely, From equation (\ref{6.25}), we have%
\begin{equation*}
\lambda \left( u\kappa (s)\cos \varphi (s)-w\kappa (s)\sin \varphi
(s)\right) =\left( \kappa (s)\right) ^{2}
\end{equation*}%
From equation (\ref{6.24}), we get $\lambda \left( \overline{\kappa }%
(s)\right) ^{2}+\left( \overline{\tau }(s)\right) ^{2}=\overline{\kappa }(s)$%
.
\end{proof}

\begin{theorem}
i) $K$ is a Mannheim curve if and only if 
\begin{equation*}
\kappa (s)=R\cos \left( \overset{s}{\underset{0}{\dint }}\tau (u)du+\varphi
_{0}\right)
\end{equation*}%
ii) $K$ is a $B-$Mannheim curve if and only if 
\begin{equation*}
\kappa (s)=R\sin \left( \overset{s}{\underset{0}{\dint }}\tau (u)du+\varphi
_{0}\right)
\end{equation*}%
where $R$ is constant.
\end{theorem}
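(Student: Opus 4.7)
The plan is to obtain both parts as immediate specializations of the preceding theorem, which already supplies the characterization
\[
\kappa(s)=F(s)\cos\!\bigl(\varphi(s)+\phi(s)\bigr),
\]
equation~(\ref{6.25}), in both directions. All that remains is to identify which values of $(u,v,w)$ correspond to the classical Mannheim and $B$-Mannheim cases, and to read off the resulting form of $\kappa$.

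For part (i) I would take $V=T$, i.e.\ $u=1$, $v=w=0$. Since $v=0$, the relation $\lambda(s)=-\int v(s)\,ds$ from equation~(\ref{3.8}) forces $\lambda$ to be a nonzero constant $\lambda_{0}$, so that $F(s)=\sqrt{u^{2}+w^{2}}/\lambda=1/\lambda_{0}$ is constant. Also $\cos\phi(s)=u/\sqrt{u^{2}+w^{2}}=1$, hence $\phi\equiv 0$. Substituting into (\ref{6.25}) gives the asserted cosine formula with $R=1/\lambda_{0}$. The converse is just the converse half of the preceding theorem applied to these same $(u,v,w)$.

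For part (ii) I would take $V=B$, i.e.\ $u=v=0$, $w=1$. Once again $\lambda$ is a nonzero constant and $F=1/\lambda_{0}$, but now $\cos\phi=0$, $\sin\phi=1$, so $\phi\equiv\pi/2$. The trigonometric identity $\cos(\varphi+\pi/2)=-\sin\varphi$ then converts (\ref{6.25}) into the sine formula
\[
\kappa(s)=R\sin\!\left(\overset{s}{\underset{0}{\dint}}\tau(u)\,du+\varphi_{0}\right)
\]
with $R=-1/\lambda_{0}$, the sign being absorbed into the constant. Again the converse is automatic.

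There is essentially no analytical obstacle; the only step worth double-checking is the bookkeeping that $v=0$ upgrades $\lambda(s)$ to a genuine constant, which is precisely what promotes the variable amplitude $F(s)$ of the previous theorem to the constant $R$ appearing here. Everything else is direct substitution into (\ref{6.25}) together with a single trigonometric phase shift.
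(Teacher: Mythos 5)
Your proposal is correct, and it is the natural way to organize the argument: both parts really are the specializations $V=T$ (so $u=1$, $v=w=0$, $\phi\equiv 0$) and $V=B$ (so $w=1$, $u=v=0$, $\phi\equiv\pi/2$) of the preceding theorem, with $v=0$ forcing $\lambda$ constant via (\ref{3.8}) and hence promoting $F$ to the constant $R$. The paper does not actually take this route: it re-derives each case from scratch by substituting the relations (\ref{6.24}) into the Mannheim condition $\overline{\kappa}=\lambda\left(\overline{\kappa}^{2}+\overline{\tau}^{2}\right)$ (resp.\ the $B$-Mannheim condition with $\overline{\tau}$), using $\overline{\kappa}^{2}+\overline{\tau}^{2}=\kappa^{2}$ to reduce everything to $\kappa\cos\varphi=\lambda\kappa^{2}$, i.e.\ $\kappa=R\cos\varphi$, and likewise with $\sin\varphi$ for part (ii). The two computations are identical in substance, so your version simply avoids duplicating work. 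Two small points of bookkeeping that you should make explicit: first, the statement of the preceding theorem prints $F(s)=\lambda\sqrt{u^{2}+w^{2}}$ while its proof gives $F(s)=\frac{\sqrt{u^{2}+w^{2}}}{\lambda}$; you are (correctly) using the latter, and it is worth saying so since only that version makes $R=1/\lambda$ come out right. Second, in part (ii) the sign from $\cos\left(\varphi+\pi/2\right)=-\sin\varphi$ must be absorbed into $R$ in \emph{both} directions of the equivalence (the paper hides this in the ambiguous constant $R=\epsilon\lambda$); you note it for the forward direction, and the converse is handled the same way.
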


\begin{proof}
i)Let $K$ be a Mannheim curve. In this case, there exist $\lambda \in $ $%
\mathbb{R}
$ such that $\left( \overline{\kappa }(s)\right) ^{2}+\left( \overline{\tau }%
(s)\right) ^{2}=\lambda \overline{\kappa }(s)$ , where "$s$" is arcparameter
of $K$. From equation (\ref{6.24}), we have $\lambda \left( \left( \overline{%
\kappa }(s)\right) ^{2}+\left( \overline{\tau }(s)\right) ^{2}\right)
=\lambda \left( \kappa (s)\right) ^{2}=\overline{\kappa }(s)$. So we have 
\begin{equation*}
\kappa (s)=R\cos \left( \overset{s}{\underset{0}{\dint }}\tau (u)du+\varphi
_{0}\right)
\end{equation*}%
where $R=\frac{1}{\lambda }=const$. Conversely, Let $M$ be a curve satisfy
that $\kappa (s)=R\cos \varphi (s)$ where $R=\frac{1}{\lambda }=const$ and $%
\varphi (s)=$ $\overset{s}{\underset{0}{\dint }}\tau (u)du+\theta _{0}$.
From equation (\ref{6.24}), we have 
\begin{equation}
\overline{\kappa }(s)=R\left( \cos \varphi \right) ^{2}  \label{6.27}
\end{equation}%
and%
\begin{equation}
\overline{\tau }(s)=R\cos \varphi \sin \varphi  \label{6.28}
\end{equation}%
From equation (\ref{6.25}), we get 
\begin{equation*}
\left( \overline{\kappa }(s)\right) ^{2}+\left( \overline{\tau }(s)\right)
^{2}=R^{2}\left( \cos \varphi (s)\right) ^{2}=R\text{ }\overline{\kappa }(s)
\end{equation*}%
So $K$ is a Mannheim curve.ii) Let $K$ be a $B-$Mannheim curve. In this
case, there exist $\lambda \in $ $%
\mathbb{R}
$ such that $\left( \overline{\kappa }(s)\right) ^{2}+\left( \overline{\tau }%
(s)\right) ^{2}=\lambda \overline{\tau }(s)$ , where "$s$" is arcparameter
of $K$. From equation (\ref{6.24}), we have $\left( \overline{\kappa }%
(s)\right) ^{2}+\left( \overline{\tau }(s)\right) ^{2}=\left( \kappa
(s)\right) ^{2}=\lambda \overline{\tau }(s)$. So we have $\cdot $%
\begin{equation*}
\kappa (s)=R\sin \left( \overset{s}{\underset{0}{\dint }}\tau (u)du+\varphi
_{0}\right)
\end{equation*}%
where $R=\epsilon \lambda =const$. Conversely, Let $M$ be a curve satisfy
that $\kappa (s)=R\sin \varphi (s)$ where $R=\frac{1}{\lambda }=const$ and $%
\varphi (s)=\overset{s}{\underset{0}{\dint }}\tau (u)du+\varphi _{0}$. From
equation (\ref{6.24}), we have 
\begin{eqnarray*}
\overline{\kappa }(s) &=&R\cos \varphi \sin \varphi \\
\overline{\tau }(s) &=&R\left( \cos \varphi \right) ^{2}
\end{eqnarray*}%
From equation (\ref{6.26}), we get 
\begin{equation*}
\left( \overline{\kappa }(s)\right) ^{2}+\left( \overline{\tau }(s)\right)
^{2}=R^{2}\left( \cos \varphi (s)\right) ^{2}=R\text{ }\overline{\tau }(s)
\end{equation*}%
So $K$ is a $B$-Mannheim curve.
\end{proof}

\begin{definition}
\bigskip In 3-Euclidean spaces, let $K$ be a curve satisfy that%
\begin{equation*}
\overline{\kappa }(\overline{s})=R\cos \left( \overset{\overline{s}}{%
\underset{0}{\dint }}\overline{\tau }(u)du+\theta _{0}\right) .
\end{equation*}%
We said that $K$ is a generating curve of the Mannheim curve.
\end{definition}

\end{document}